\font\reffont=cmcsc9
\newcommand{\norm}[1]{\left\Vert#1\right\Vert}
\newtheorem{theorem}{Theorem}
\theoremstyle{definition}
\newtheorem{obs}{Observation}
\title{Olver's asymptotic method: a special case}
\author{\\Chelo Ferreira$^1$, Jos\'{e} L. L\'{o}pez$^2$ and Ester P\'{e}rez Sinus\'{\i}a$^1$\\\\
\small{$^1$ \textsf{\textit{Dpto. de Matem\'{a}tica Aplicada, IUMA, Universidad de Zaragoza}}}\\
\small{ \textsf{\textit{e-mail: cferrei@unizar.es,
ester.perez@unizar.es}}}\\
\small{$^2$
\textsf{\textit{Dpto. de Ingenier\'{\i}a Matem\'{a}tica e Inform\'{a}tica, Universidad P\'{u}blica de Navarra and BIFI, Zaragoza}}}\\
\small{ \textsf{\textit{ e-mail: jl.lopez@unavarra.es}}}}
\date{}
\begin{document}
\normalsize \maketitle

\begin{abstract}
We consider the asymptotic method designed by F. Olver [Olver, 1974] for linear differential equations of the second order containing a large (asymptotic) parameter $\Lambda$: $x^my''-\Lambda^2y=g(x)y$, with $m\in\mathbb{Z}$ and $g$ continuous. Olver studies in detail the cases $m\ne 2$, specially the cases $m=0,\pm 1$, giving the Poincar\'e-type asymptotic expansion of two independent solutions of the equation. The case $m=2$ is different, as the behavior of the solutions for large $\Lambda$ is not of exponential type, but of power type. In this case, Olver's theory does not give as many details as it gives in the cases $m\ne 2$. Then, we consider here the special case $m=2$. We propose two different techniques to handle the problem: (i) a modification of Olver's method that replaces the role of the exponential approximations by power approximations and (ii) the transformation of the differential problem into a fixed point problem from which we construct an asymptotic sequence of functions that converges to the unique solution of the problem.  Moreover, we show that this second technique may also be applied to nonlinear differential equations with a large parameter.
 \\\\
\noindent \textsf{2010 AMS \textit{Mathematics Subject
Classification:} 34A12; 41A58; 41A60; 34B27.
} \\\\
\noindent  \textsf{Keywords \& Phrases:} Second order differential equations. Asymptotic expansions. Green's functions. Banach's fixed point theorem.
\\\\
\end{abstract}

\section{Introduction}

The most famous asymptotic method for second order linear differential equations containing a large parameter is, no doubt, Olver's method. In \cite[Chaps. 10, 11, 12]{olver}, Olver considers the differential equation
\begin{equation}\label{olvereq}
u''-{\tilde\Lambda^2\over z^m}u=h(z)u,\qquad \tilde\Lambda\to\infty,
\end{equation}
with $m=0,-1,1$, $\tilde\Lambda$ is a complex parameter, $z$ is a complex variable and $h$ is an analytic function in a certain region of the complex plane. Correspondingly to these three different $m-$cases, Olver divides the study of \eqref{olvereq} in three canonical cases, say I, II and III, analyzed in Chapters 10, 11 and 12 respectively. In Case I, Olver completes the theory developed in the well-known Liouville-Green approximation, giving a rigorous meaning to the approximation and providing error bounds for the expansions of solutions of \eqref{olvereq} for $m=0$. In Cases II and III, Olver extends the theory introduced in Case I considering, respectively, the case $m=-1$ (differential equations with a turning point) and the case $m=1$ (differential equations with a regular singular point).

In \cite[Chap. 12, Sec. 14]{olver} we can also find indications about the generalization of the study of the asymptotics of the solutions of \eqref{olvereq} for general $m\in\mathbb{Z}$, except $m=2$. In summary, we have that for any $m\in\mathbb{Z}\setminus\lbrace 2\rbrace$, two independent solutions of \eqref{olvereq} have the form
\begin{equation}\label{expan}
u(z)=P_m(z)\sum_{k=0}^{n-1}{A_k(z)\over\tilde\Lambda^{2k}}+{1\over\tilde\Lambda^2}P'_m(z)\sum_{k=0}^{n-1}{B_k(z)\over\tilde\Lambda^{2k}}+R_{m,n}(z),
\end{equation}
where $R_{m,n}(z)={\cal O}(\tilde\Lambda^{-2n})$ uniformly for $z$ in a certain region in the complex plane. In this formula, $P_m(z)$ is one of the two following basic solutions of \eqref{olvereq}, that is, independent solutions of \eqref{olvereq} for $h=0$:
\begin{equation}\label{besels}
P_m(z):=\begin{cases}\sqrt{z}I_{\hat m}(2\hat m\tilde\Lambda z^{1/(2\hat m)}), \\\vspace*{-0.4cm}\\ \sqrt{z}K_{\hat m}(2\hat m\tilde\Lambda z^{1/(2\hat m)}),
\end{cases} \qquad \hat m:={1\over 2-m}.
\end{equation}
In this formula and in the remaining of the paper, the symbols $I_\nu(z)$ and $K_\nu(z)$ denote the principal values of the modified Bessel functions. For example, for $m=0,1,3,4,5,\ldots$, the coefficients $A_k$ and $B_k$ are given by the following system of recurrences: $A_0(z)=1$ and
\begin{equation}\nonumber
\begin{split}
B_n(z)=& {z^{m/2}\over 2}\int z^{m/2}[h(z)A_n(z)-A''_n(z)]dz, \\
A_{n+1}(z)=& -{1\over 2}B_n'(z)+{1\over 2}\int h(z)B_n(z)dz,
\end{split} \hspace*{3cm} n=0,1,2,\ldots
\end{equation}
Both families of coefficients $A_n$ and $B_n$ are analytic at $z=0$ when $h(z)$ is also analytic there. Olver's important contribution is the proof of the asymptotic character of the two expansions \eqref{expan}$-$\eqref{besels} and the derivation of error bounds for the remainder $R_{m,n}(z)$.

For large $\tilde\Lambda$ and fixed $z$, both solutions have an asymptotic behavior of exponential type \cite[Sec. 10.30(ii)]{nist}:
$$
\sqrt{z}I_{\hat m}(2\hat m\tilde\Lambda z^{1/(2\hat m)})\sim{z^{m/4}\over\sqrt{\tilde\Lambda}}e^{2\vert\hat m\Re(\tilde\Lambda z^{1/(2\hat m)})|}, \qquad
\sqrt{z}K_{\hat m}(2\hat m\tilde\Lambda z^{1/(2\hat m)})\sim{z^{m/4}\over\sqrt{\tilde\Lambda}}e^{-2\hat m\Re(\tilde\Lambda z^{1/(2\hat m)})},
$$
both valid in the sector $\vert$Arg$(\tilde\Lambda z^{1/(2\hat m)})\vert<3\pi/2$. Therefore, for any $m\ne 2$, two independent solutions of \eqref{olvereq} have an exponential asymptotic behavior for large $\tilde\Lambda$ and fixed $z$. The above approximations obviously fail for $m=2$. This case is considered by Olver in \cite[Chap. 6, Sec. 5.3]{olver}, where he gives the first order asymptotic approximation (WKB approximation) for two independent solutions of \eqref{olvereq}. Also, in \cite[Chap. 10, Sec. 4.1]{olver}, Olver gives some indications about the derivation of a complete asymptotic expansion in terms of the expansion given for the case $m=0$, although details are not given there.

The purpose of this paper is to analyze the asymptotic behavior of the solutions of the equation $u''-\tilde\Lambda^2z^{-2}u=h(z)u$ in detail. To this end, in the next section we introduce an appropriate change of unknown in the differential equation. In Section 3 we use a fixed point theorem and the Green function of an auxiliary initial value problem to derive an asymptotic as well as convergent expansion of a couple of independent solutions of the equation in terms of iterated integrals of $h(z)$; this technique is based on our previous investigations \cite{lopez}, \cite{lopezdos}, \cite{lopeztres}. In Section 4 we generalize this technique to nonlinear problems, where we obtain an asymptotic expansion of an initial value problem for a nonlinear equation. In Section 5 we use Olver's techniques to obtain asymptotic expansions, of Poincar\'e-type, of two independent solutions of the equation, different from those obtained in Section 3. Section 6 contains and example and some numerical experiments and Section 7 a few remarks and conclusions.

\section{Preliminaries}

Consider the differential equation \eqref{olvereq} with $m=2$. For later convenience, we define the function $g(z):=zh(z)$ and a new large parameter
\begin{equation}\label{lambda}
\Lambda:={1+\sqrt{4\tilde\Lambda^2+1}\over 2}.
\end{equation}
In terms of this parameter and the new function $g(z)$, equation \eqref{olvereq} with $m=2$ reads
\begin{equation}\label{ecu}
z^2 u''(z)-\Lambda(\Lambda-1)u(z)=z g(z)u(z).
\end{equation}
Because this equation is invariant under the transformation $\Lambda\to\ 1-\Lambda$, in the remaining of the paper, and without loss of generality, we consider $\Re\Lambda>1/2$. As we mentioned in the introduction, the general formula \eqref{expan} is not directly applicable to this equation; for  $m=2$, the index $\hat m$ of the basic Bessel functions approximants in \eqref{besels} becomes infinite, the asymptotic behavior of the solutions of \eqref{ecu} is not exponential in $\Lambda$. On the other hand, as it is explained in \cite{lopezliouville}, when we consider this equation with an initial condition at the point $z=0$, a fixed point technique does not work either: the exponent $m=2$ in the coefficient $z^2$ of $u''$ makes the iterated integrals related to the fixed point iterations divergent at $z=0$.

Both problems may be overcome by means of an appropriate change of unknown $u\to y$ that modifies the exponent $m=2$. In order to perform the appropriate change of unknown, we consider here the Frobenius theory.
When the function $g(z)$ is analytic at $z=0$, the exponents of the Frobenius solutions of the differential equation \eqref{ecu} at the regular singular point $z=0$ are $\mu_1=\Lambda$ and $\mu_2=1-\Lambda$. Therefore, two independent solutions of this equation behave, at $z=0$, as $z^{\Lambda}$ and $z^{1-\Lambda}$ respectively.
This fact suggests the following change of unknown: $u\to y:=z^{-\Lambda}u$. The new unknown $y$ satisfies the differential equation
\begin{equation}\label{ecuacion}
zy''(z)+2\Lambda y'(z)=g(z)y(z).
\end{equation}
When $g(z)$ is an analytic function at $z=0$ we know, from Frobenius theory, that this equation has two independent solutions that behave, at $z=0$, as $1$ and $z^{1-2\Lambda}$ respectively. Therefore, in the linear two-dimensional space of solutions of this equation, only one ray of solutions is bounded at $z=0$. These facts determine the kind of possible well-posed problems for this equation. A well-posed initial value problem for the differential equation \eqref{ecuacion} with initial datum given at $z=0$ is
\begin{equation}\label{problemuno}
\begin{cases}
zy''(z)+2\Lambda y'(z)=g(z)y(z)\quad \text{in $\cal D$}, \\
y(0)=y_0,
\end{cases}
\end{equation}
where $y_0$ is any complex parameter, $y_0=\mathcal{O}(1)$ as $\Lambda\to\infty$, and ${\cal D}$ is a star-like domain (bounded or unbounded) in the complex plane centered at $z=0$. In the next section we will show that this problem has a unique solution and we will obtain an asymptotic approximation of the unique solution of this problem. In order to derive an asymptotic expansion of a second independent solution of \eqref{ecuacion} we must consider an initial value problem with initial conditions prescribed at another point $z_0\in{\cal D}$, $z_0\ne 0$:
\begin{equation}\label{problemdos}
\begin{cases}
zy''(z)+2\Lambda y'(z)=g(z)y(z) \quad \text{in ${\cal D}$}, \\
y(z_0)=\bar y_0, \quad y'(z_0)=y_1,
\end{cases}
\end{equation}
where $\bar y_0$ and $y_1$ are complex parameters with , $\bar y_0=\mathcal{O}(1)$ and $y_1=\mathcal{O}(\Lambda)$ as $\Lambda\to\infty$. The existence and uniqueness of solution of this problem follows from Frobenius theory (when $g(z)$ is analytic at $z=z_0$) or from Picard-Lindelof's theorem (when $g(z)$ is continuous at $z=z_0$). In the following, $y_+(z)$ and $y_-(z)$ denote, respectively, the unique solutions of problems \eqref{problemuno} and \eqref{problemdos}.

When we undo the above mentioned change of unknowns, we find that $u_\pm(z):=z^\Lambda y_\pm(z)$ are a couple of independent solutions of \eqref{ecu} whenever $(u_+(z_0),u_+'(z_0))\ne (u_-(z_0), u_-'(z_0))$. Problem \eqref{problemuno} for $y_+$ is equivalent to the following problem for $u_+$:
\begin{equation}\nonumber
\begin{cases}
z^2 u_+''(z)-\Lambda(\Lambda-1)u_+(z)=z g(z)u_+(z)\quad \text{in ${\cal D}$}, \\
\displaystyle\lim_{z\to 0}[z^{-\Lambda}u_+(z)]=y_0,
\end{cases}
\end{equation}
problem that has a unique solution $u_+(z)$.
Problem \eqref{problemdos} for $y_-$ is equivalent to the following problem for $u_-$:
\begin{equation}\nonumber
\begin{cases}
z^2 u_-''(z)-\Lambda(\Lambda-1)u_-(z)=z g(z)u_-(z)\quad \text{in ${\cal D}$},\\
z_0^{-\Lambda}u_-(z_0)=\bar y_0, \quad \displaystyle \lim_{z\to z_0}[z^{-\Lambda}u_-(z)]'=y_1,
\end{cases}
\end{equation}
problem that has a unique solution $u_-(z)$.

In the following section, for each problem, we design a sequence of functions that converges to the unique solution of the problem. For each problem, that sequence has the property of being an asymptotic sequence (not of Poincar\'e-type) for large $\Lambda$. In Section 5 we apply Olver's method to equation \eqref{ecuacion} and find an asymptotic expansion of Poincar\'e-type of two independent solutions of this equation.

\section{A fixed point method}

The unique solution of the initial value problem
\begin{equation}\label{fiuno}
\begin{cases}
z\phi''(z)+2\Lambda \phi'(z)=0\quad \text{in ${\cal D}$,} \\
\phi(0)=y_0,
\end{cases}
\end{equation}
is $\phi_+(z):=y_0$. And the unique solution of the problem
\begin{equation}\label{fidos}
\begin{cases}z\phi''(z)+2\Lambda\phi'(z)=0 \quad \text{in ${\cal D}$,} \\
\phi(z_0)=\bar y_0, \quad \phi'(z_0)=y_1,
\end{cases}
\end{equation}
is
\begin{equation}\label{fi}
\phi_-(z):=\bar y_0+y_1{z_0\over 1-2\Lambda}\left[\left({z\over z_0}\right)^{1-2\Lambda}-1\right].
\end{equation}

After the change of unknown $y_\pm(z)\to w_\pm(z)=y_\pm(z)-\phi_\pm(z)$, and using \eqref{fiuno} and \eqref{fidos}, we find that problems \eqref{problemuno} and \eqref{problemdos} read, respectively,
\begin{equation}\label{problemunobis}
\begin{cases}
zw_+''(z)+2\Lambda w_+'(z)=F_+(z,w_+):=g(z)[w_+(z)+\phi_+(z)] \quad \text{in ${\cal D}$,}  \\
w_+(0)=0,
\end{cases}
\end{equation}
and
\begin{equation}\label{problemdosbis}
\begin{cases}
zw_-''(z)+2\Lambda w_-'(z)=F_-(z,w_-):=g(z)[w_-(z)+\phi_-(z)] \quad \text{in ${\cal D}$,} \\
w_-(z_0)=w_-'(z_0)=0.
\end{cases}
\end{equation}
For convenience, we restrict the differential equations in both problems, \eqref{problemunobis} and \eqref{problemdosbis} (and hence \eqref{problemuno} and \eqref{problemdos}), to an open straight segment ${\cal L}\subset{\cal D}$ (that may be unbounded if ${\cal D}$ is unbounded) with $z=0$ as an end point. Moreover, for problem \eqref{problemdosbis}, $z_0\in{\cal L}$ and $\vert z\vert<\vert z_0\vert$. See Figure 1 below.

\begin{figure}[h!]
\begin{center}
\includegraphics[width=0.7\textwidth]{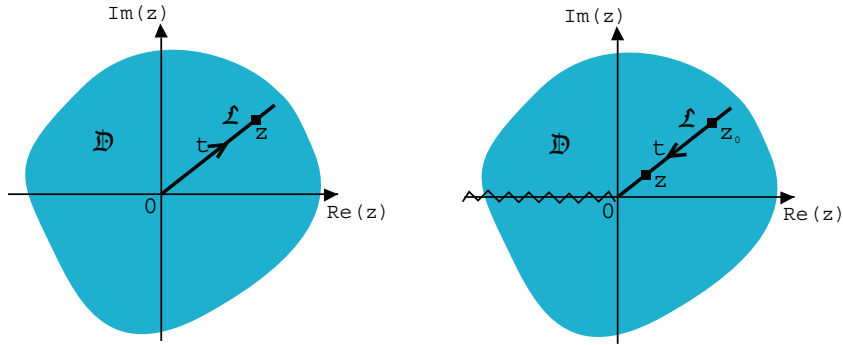}
\end{center}\vspace*{-1.75cm}
\caption{\small Domains ${\cal D}$ and integration paths associated to the respective problems \eqref{problemuno} and \eqref{problemdos}. In both problems, the kernel of the operators ${\bf T}$ and $\tilde{\bf T}$ is bounded by $2$.}
\label{fig:ejem3}
\end{figure}

For the first problem, we seek for solutions of the equation ${\bf L}_+[w_+]:=zw_+''+2\Lambda w'_+-F_+(z,w_+)$ in the Banach space ${\cal B}_+:=\lbrace w_+:{\cal L}\to\mathbb{C}$, $w_+(0)=0\rbrace$. For the second problem, we seek for solutions of the equation ${\bf L}_-[w_-]:=zw_-''+2\Lambda w'_--F_-(z,w_-)$ in the Banach space ${\cal B}_-:=\lbrace w_-:{\cal L}\to\mathbb{C}$, $w_-(z_0)=0\rbrace$. Both spaces equipped with the {\it sup} norm:
$$
\vert\vert w_\pm\vert\vert_\infty:=\sup_{z\in{\cal L}}\vert w_\pm(z)\vert.
$$
We write the equation ${\bf L}_\pm[w_\pm]=0$ in the form ${\bf L}_\pm[w_\pm]={\bf M}[w_\pm]-F_\pm(z,w_\pm)$, with ${\bf M}[w]:=zw''+2\Lambda w'$. Then we solve the equation ${\bf L}_\pm[w_\pm]=0$ for $w_\pm$ using Green's function $G_\pm(z,t)$ of the operator ${\bf M}$ with the appropriate initial conditions \cite{stakgold}. For problem \eqref{problemunobis}, $G_+(z,t)$ is the unique solution of the problem
$$
\begin{cases}
zG_{zz}+2\Lambda G_z=\delta(z-t)\quad \text{in $\mathcal{L}$,}\\
G(0,t)=0, \quad t\in{\cal L}.
\end{cases}
$$
It is given by
$$
G_+(z,t)={1\over 2\Lambda-1}\left[1-\left({t\over z}\right)^{2\Lambda-1}\right]\chi_{[0,z]}(t),
$$
where $\chi_{[0,z]}(t)$ is the characteristic function of the interval $[0,z]$.
For problem \eqref{problemdosbis}, $G_-(z,t)$ is the unique solution of the problem
$$
\begin{cases}
zG_{zz}+2\Lambda G_z=\delta(z-t)\quad \text{in ${\cal L}$}, \\
G(z_0,t)=G_z(z_0,t)=0, \quad t,z_0\in{\cal L}.
\end{cases}
$$
It is given by
$$
G_-(z,t)={1\over 2\Lambda-1}\left[1-\left({t\over z}\right)^{2\Lambda-1}\right]\chi_{[z,z_0]}(t).
$$
Then, any solution $w_+(z)$ of \eqref{problemunobis} is a solution of the Volterra integral equation $w_+(z)=[{\bf T}w_+](z)$, and any solution $w_-(z)$ of \eqref{problemdosbis} is a solution of the Volterra integral equation $w_-(z)=[{\bf T}w_-](z)$, where the integral operator ${\bf T}$ is defined by
\begin{equation}\nonumber
[{\bf T}w_\pm](z):= {1\over 2\Lambda-1}\int_{z_0}^z\left[1-\left({t\over z}\right)^{2\Lambda-1}\right]g(t)[w_\pm(t)+\phi_\pm(t)]dt,
\end{equation}
where $z_0$ must be set equal to zero for $w_+$. For later convenience, in the case of $w_-$ we need to define a {\it rescaled} unknown $\tilde w_-(z):=z^{2\Lambda-1}w_-(z)$ and consider the {\it rescaled} operator
\begin{equation}\nonumber
[{\bf \tilde T}\tilde w_-](z):= {1\over 2\Lambda-1}\int_{z_0}^z\left[\left({z\over t}\right)^{2\Lambda-1}-1\right]g(t)[\tilde w_-(t)+\tilde\phi_-(t)]dt,
\end{equation}
with $\tilde\phi_-(z):=z^{2\Lambda-1}\phi_-(z)$.

For any complex $z$ in ${\cal L}$, the kernel $1-(t/z)^{2\Lambda-1}$ of ${\bf T}$, is uniformly bounded in $t\in[0,z]$ by $2$, independently of $\Lambda$ and $z$. Also, for any complex $z$ in ${\cal L}$, with $\vert z\vert<\vert z_0\vert$, the kernel $(z/t)^{2\Lambda-1}-1$ of ${\bf \tilde T}$, is uniformly bounded in $t\in[z,z_0]$ by $2$, independently of $\Lambda$ and $z$.

From the Banach fixed point theorem \cite[pp. 26, Theorem 3.1]{bailey} it is well-known that, if any power of the operator ${\bf T}$ is contractive in ${\cal B}_+$, then the equation $w_+(z)=[{\bf T} w_+](z)$ has a unique solution $w_+(z)$ (fixed point of ${\bf T}$) and the sequence $w_{n+1}^+=[{\bf T} w_n^+]$, $w_0^+=0$, converges to that solution $w_+(z)$. Analogously, if any power of the operator ${\bf \tilde T}$ is contractive in ${\cal B}_-$, then the equation $\tilde w_-(z)=[{\bf \tilde T} \tilde w_-](z)$ has a unique solution $\tilde w_-(z)$ (fixed point of $\tilde{\bf T}$) and the sequence $\tilde w_{n+1}^-=[{\bf \tilde T} \tilde w_n^-]$, $\tilde w_0^-=0$, converges to that solution $\tilde w_-(z)$.

We show this for the operator $\tilde{\bf T}$. The proof for the operator ${\bf T}$ is identical replacing $z_0$ by $0$.  It is straightforward to show the contractive character of the operator $\tilde{\bf T}$: from its definition we have that, for any couple $u$, $v\in{\cal B}_-$,
$$
\vert[\tilde{\bf T}u](z)-[\tilde{\bf T}v](z)\vert\le {2\over \vert 2\Lambda-1\vert }\int_{z_0}^z\vert g(t)\vert\vert u(t)-v(t)\vert\vert dt\vert\le \left\vert{2(z-z_0)\over 2\Lambda-1}\right\vert\,\vert\vert g\vert\vert_\infty\,\vert\vert u-v\vert\vert_\infty.
$$
We also have
\begin{equation}\nonumber
\begin{split}
\vert[\tilde{\bf T}^2u](z)-[\tilde{\bf T}^2v](z)\vert&\le {2\over \vert 2\Lambda-1\vert}\int_{z_0}^z\vert g(t)\vert\vert [\tilde{\bf T}u](t)-[\tilde{\bf T}v](t)\vert\vert dt\vert \\&\le \left\vert{[2(z-z_0)]^2\over 2(2\Lambda-1)^2}\right\vert\,\vert\vert g\vert\vert_\infty^2\,\vert\vert u-v\vert\vert_\infty
\end{split}
\end{equation}
and
\begin{equation}\nonumber
\begin{split}
\vert[\tilde{\bf T}^3u](z)-[\tilde{\bf T}^3v](z)\vert &\le {2\over \vert 2\Lambda-1\vert}\int_{z_0}^z\vert g(t)\vert\vert [\tilde{\bf T}^2u](t)-[\tilde{\bf T}^2v](t)\vert\vert dt\vert\\ &\le \left\vert{2(z-z_0)]^3\over 3!(2\Lambda-1)^3}\right\vert\,\vert\vert g\vert\vert_\infty^3\,\vert\vert u-v\vert\vert_\infty.
\end{split}
\end{equation}
It is straightforward to prove, by means of induction over $n$ that, for $n=1,2,3$,\ldots,
\begin{equation}\label{tn}
\vert[\tilde{\bf T}^nu](z)-[\tilde{\bf T}^nv](z)\vert\le \left\vert{(2(z-z_0))^n\over n!(2\Lambda-1)^n}\right\vert\,\vert\vert g\vert\vert_\infty^n\,\vert\vert u-v\vert\vert_\infty.
\end{equation}
This means that, for bounded $z$, the operators ${\bf T}^n$ and $\tilde{\bf T}^n$ are contractive for large enough $n$.  From \cite[pp. 26, Theorem 3.1]{bailey} we have that the sequence $w_{n+1}^+=[{\bf T}w_n^+]$, $n=0,1,2,\ldots$, $w_0^+=0$, converges, for any $z\in{\cal L}$ bounded, to the unique solution $w_+(z)$ of problem \eqref{problemunobis} and the sequence $\tilde w_{n+1}^-=[\tilde{\bf T}\tilde w_n^-]$, $n=0,1,2,\ldots$, $\tilde w_0^-=0$, converges, for any $z\in{\cal L}$ bounded, to the unique solution $w_-(z)$ of problem \eqref{problemdosbis} multiplied by $z^{2\Lambda-1}$. Or equivalently, the sequence $y^+_n:=w^+_n+\phi_+$, that is,
\begin{equation}\label{recumas}
y^+_{n+1}(z)=y_0+{z\over 2\Lambda-1}\int_0^1\left[1-t^{2\Lambda-1}\right]g(zt)y^+_n(zt)dt, \qquad y^+_0(z)=y_0,
\end{equation}
converges, for $z\in{\cal L}$ bounded, to the unique solution $y_+(z)$ of \eqref{problemuno}. And the sequence $y^-_n:=w^-_n+\phi_-$, with $w^-_n:=z^{1-2\Lambda}\tilde w^-_n$, that is,
\begin{equation}\label{recumenos}
y^-_{n+1}(z)=\phi_-(z)+{1\over 2\Lambda-1}\int_{z_0}^z\left[1-\left({t\over z}\right)^{2\Lambda-1}\right]g(t)y^-_n(t)dt, \qquad  y^-_0(z)=\phi_-(z),
\end{equation}
converges, for $z\in{\cal L}$ bounded, to the unique solution $y_-(z)$ of \eqref{problemdos}.

Let's define the remainder of the approximation by $R^\pm_n(z):=y_\pm(z)-y^\pm_n(z)$. Setting $v(z)=w_+(z)$ and $u(z)=w^+_0(z)=0$ in \eqref{tn} and using that $[{\bf T}^n w_+]=w_+$ and $[{\bf T}^n w^+_0]=w^+_n$ or setting $v(z)=w_-(z)$ and $u(z)=w^-_0(z)=0$ in \eqref{tn} and using that $[\tilde{\bf T}^n \tilde w_-]=\tilde w_-$ and $[\tilde{\bf T}^n \tilde w^-_0]=\tilde w^-_n$ we find
$$
\vert w_\pm(z)-w^\pm_n(z)\vert\le{\vert\vert g\vert\vert_\infty^n\vert [2(z-z_0)]^n\vert\over n!\vert 2\Lambda-1\vert^n}\vert\vert w_\pm\vert\vert_\infty.
$$
In this formula and formulas below involving $w^+$ or $y^+$ (not $w^-$ or $y^-$) we must set $z_0=0$.
Using that $y_\pm(z)=w_\pm(z)+\phi_\pm(z)$ and $y^\pm_n(z)=w^\pm_n(z)+\phi_\pm(z)$ we find that the remainder $R^\pm_n(z)$ is bounded by
\begin{equation}
\label{kk}
\vert R^\pm_n(z)\vert\le{\vert\vert g\vert\vert_\infty^n\vert [2(z-z_0)]^n\vert\over n!\vert 2\Lambda-1\vert^n}\vert\vert y_\pm-\phi_\pm\vert\vert_\infty.
\end{equation}
Moreover, we have that, for problem \eqref{problemuno},
$$
y^+_{n+1}(z)-y^+_n(z)={z\over 2\Lambda-1}\int_0^1\left[1-t^{2\Lambda-1}\right]g(zt)[y^+_n(zt)-y^+_{n-1}(zt)]dt,
$$
and, for problem \eqref{problemdos},
$$
y^-_{n+1}(z)-y^-_n(z)={1\over 2\Lambda-1}\int_{z_0}^z\left[1-\left({t\over z}\right)^{2\Lambda-1}\right]g(t)[y^-_n(t)-y^-_{n-1}(t)]dt.
$$
Then, for any problem,
$$
\vert\vert y^\pm_{n+1}-y^\pm_n\vert\vert_\infty\le{2\vert z-z_0\vert\,\vert\vert g\vert\vert_\infty\over \vert 2\Lambda-1\vert}\,\vert\vert y^\pm_n-y^\pm_{n-1}\vert\vert_\infty.
$$
This means that the expansion
$$
y^\pm(z)=\phi_\pm+\sum_{k=0}^{n-1}[y^\pm_{k+1}(z)-y^\pm_k(z)]+R^\pm_n(z)
$$
is an asymptotic expansion for large $\Lambda$ and bounded $z\in{\cal L}$.

We see from \eqref{recumas} that the sequence $y^+_n(z)$ is a sequence of analytic functions in ${\cal D}$. A sequence of analytic functions that converges uniformly in any compact contained in ${\cal D}$, that is, the unique solution $y_+(z)$ of problem \eqref{problemuno} is analytic in ${\cal D}$. Analogously, the sequence $y^-_n(z)$ in \eqref{recumenos} is a sequence of analytic functions in ${\cal D}$ with, possibly, a branch point at $z=0$. This means that the unique solution $y_-(z)$ of problem \eqref{problemdos} is analytic in ${\cal D}$ except, possibly, for a branch point at $z=0$.

\begin{obs} When $g(z)$ is not analytic in ${\cal D}$, but only continuous, from the above derivation we still see that, problems \eqref{problemuno} and \eqref{problemdos} have a unique solution and the recurrences \eqref{recumas} and \eqref{recumenos} converge to the respective solutions.
\end{obs}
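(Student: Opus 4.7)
The plan is to observe that in the entire derivation leading up to this observation, analyticity of $g$ was never actually used; it was only invoked at the very end to conclude analyticity of the solutions $y_\pm$. So the proof strategy is to revisit the construction and check that every step goes through under the weaker hypothesis that $g$ is merely continuous on $\mathcal{L}$.

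First I would verify that the integral operators $\mathbf{T}$ and $\tilde{\mathbf{T}}$ are well-defined on the Banach spaces $\mathcal{B}_+$ and $\mathcal{B}_-$: since $\mathcal{L}$ is a (bounded) straight segment on which $g$ is continuous, $\|g\|_\infty<\infty$, and the kernels $1-(t/z)^{2\Lambda-1}$ and $(z/t)^{2\Lambda-1}-1$ are bounded by $2$ as already noted, independently of analyticity. Therefore the contraction estimates culminating in \eqref{tn} remain valid verbatim: only the bound $|g(t)|\le\|g\|_\infty$ is used, not analyticity. The Banach fixed point theorem then gives, exactly as before, unique fixed points $w_+$ and $\tilde w_-$ to which the Picard iterates $w^+_{n+1}=\mathbf{T}w^+_n$, $w^+_0=0$, and $\tilde w^-_{n+1}=\tilde{\mathbf{T}}\tilde w^-_n$, $\tilde w^-_0=0$, converge uniformly on bounded subsets of $\mathcal{L}$. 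Translating back via $y_\pm=w_\pm+\phi_\pm$ (resp.\ $w_-=z^{1-2\Lambda}\tilde w_-$) yields the convergence of the recurrences \eqref{recumas} and \eqref{recumenos}.

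Next I would verify the equivalence between the integral equations and the original initial value problems \eqref{problemuno} and \eqref{problemdos} under the continuity hypothesis. This is the only step that deserves care: previously the existence/uniqueness of solutions of the differential problems was justified by Frobenius (analytic case) or Picard--Lindel\"of (continuous case). Here I would instead argue directly: any continuous fixed point $w_\pm$ of $\mathbf{T}$ (resp.\ $\tilde{\mathbf{T}}$) is automatically $C^2$ on $\mathcal{L}\setminus\{0\}$ because the right-hand side of the integral equation is $C^2$ whenever $g$ is continuous (differentiating under the integral once gives a continuous integrand, differentiating twice produces the differential equation), and the boundary conditions $w_+(0)=0$ and $w_-(z_0)=w_-'(z_0)=0$ hold by construction of the Green's functions. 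Conversely, any classical solution of \eqref{problemunobis} or \eqref{problemdosbis} satisfies the integral equation because the Green's function representation follows from a direct integration by parts which only uses continuity of the source term $F_\pm(z,w_\pm)=g(z)[w_\pm(z)+\phi_\pm(z)]$. This gives a bijection between classical solutions and fixed points, hence uniqueness of the solutions of \eqref{problemuno} and \eqref{problemdos}.

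The main obstacle I anticipate is precisely this equivalence verification: with $g$ only continuous one must be slightly careful near $z=0$ for problem \eqref{problemuno}, where the coefficient of $w_+''$ vanishes and the integral equation contains the factor $1/(2\Lambda-1)$ together with a kernel that is only $C^0$ at $t=z$. However, since $\Re\Lambda>1/2$ ensures $|2\Lambda-1|>0$ and $(t/z)^{2\Lambda-1}\to 0$ as $t\to 0$ along $\mathcal{L}$, all integrals are absolutely convergent and the limit $w_+(0)=0$ is attained; the remaining steps are routine. The asymptotic character of the expansion $y_\pm(z)=\phi_\pm+\sum_{k=0}^{n-1}[y^\pm_{k+1}-y^\pm_k]+R^\pm_n$ also remains valid since \eqref{kk} and the ratio estimate $\|y^\pm_{n+1}-y^\pm_n\|_\infty\le 2|z-z_0|\|g\|_\infty/|2\Lambda-1|\,\|y^\pm_n-y^\pm_{n-1}\|_\infty$ depend only on $\|g\|_\infty$. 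Only the final analyticity conclusion at the end of the preceding section must be discarded in the merely continuous case.
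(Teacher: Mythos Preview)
Your proposal is correct and matches the paper's intended reasoning: the paper offers no separate proof of this observation, merely asserting that ``from the above derivation we still see'' the conclusion, i.e., that the contraction argument of Section~3 only uses $\Vert g\Vert_\infty<\infty$ and the uniform bound $2$ on the kernels, never analyticity. You have actually been more careful than the paper by explicitly checking the equivalence between fixed points of the integral operators and classical solutions of \eqref{problemunobis}--\eqref{problemdosbis} under the continuity hypothesis; the paper leaves this implicit.
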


\begin{obs}  When $g(z)$ is an elementary function (analytic or not in ${\cal D}$), the successive approximations $y_n$ of the unique solution of those problems are iterated integrals of elementary functions.
\end{obs}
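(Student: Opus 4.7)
The plan is a direct induction on $n$, built on the closed-form recurrences \eqref{recumas} and \eqref{recumenos} derived in the preceding pages and on the explicit formula \eqref{fi} for $\phi_-(z)$. I will fix as the working definition that an \emph{iterated integral of elementary functions} is any element of the smallest class of functions (of $z$) containing the elementary functions and closed under (i) addition, (ii) multiplication by an elementary function, and (iii) the operation $f(z)\mapsto\int_{a(z)}^{b(z)}\!f(t)\,dt$ whenever $a,b$ are elementary. The whole statement then reduces to showing that the right-hand sides of \eqref{recumas} and \eqref{recumenos} preserve this class.

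For the base case, in problem \eqref{problemuno} the seed is $y^+_0(z)=y_0$, a constant, hence elementary; in problem \eqref{problemdos} the seed is $y^-_0(z)=\phi_-(z)$, which by \eqref{fi} is the sum of a constant and a constant multiple of $(z/z_0)^{1-2\Lambda}$, hence elementary. Both therefore belong to the class trivially (as elementary functions).

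For the inductive step, suppose $y_n^{\pm}(z)$ is an iterated integral of elementary functions. Inspect the integrand in \eqref{recumas}: the kernel $1-t^{2\Lambda-1}$ is elementary in $t$; the factor $g(zt)$ is elementary in $t$ (with $z$ as parameter) by hypothesis on $g$; and $y_n^+(zt)$ is, by the induction hypothesis applied along the ray $t\mapsto zt$, an iterated integral of elementary functions. By closure property (ii) the product is an iterated integral of elementary functions, and by (iii) so is its integral in $t$ from $0$ to $1$; multiplying by the elementary prefactor $z/(2\Lambda-1)$ and adding the elementary constant $y_0$ then gives $y_{n+1}^+(z)$ in the class. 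The argument for \eqref{recumenos} is identical, with the kernel $1-(t/z)^{2\Lambda-1}$ (elementary in $t$ with $z$ as parameter), the elementary factor $g(t)$, and the outer elementary summand $\phi_-(z)$ from \eqref{fi}. This closes the induction.

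There is essentially no obstacle of substance: the only thing one must be mildly careful about is the change of variable $t\mapsto zt$ in \eqref{recumas}, which turns the integration into one over a fixed interval with elementary endpoints, so property (iii) applies cleanly. Once the closure class is fixed as above, the proof is a one-line verification at each step and no analytic input beyond the recurrences and \eqref{fi} is required.
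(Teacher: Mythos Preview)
Your proposal is correct and is essentially what the paper has in mind: the paper states this as an observation without proof, the intended justification being precisely that the recurrences \eqref{recumas} and \eqref{recumenos} start from elementary seeds $y_0$ and $\phi_-(z)$ (given explicitly in \eqref{fi}) and at each step apply only integration against an elementary kernel times $g$. Your induction merely makes this explicit, so there is nothing to compare.
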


\section{The nonlinear case}

The technique used in the previous section may be easily generalized to nonlinear problems of the form
\begin{equation}\label{nonlinolvereq}
u''-{\tilde\Lambda^2\over z^2}u=\tilde f(z,u),\qquad\tilde\Lambda\to\infty,
\end{equation}
where the function $\tilde f(z,u)$ is continuous for $(z,y)\in{\cal D}\times\mathbb{C}$ and satisfies the following Lipschitz condition in its second variable:
\begin{equation}\label{lip}
\vert \tilde f(z,u)-\tilde f(z,v)\vert\le {L\over z}\vert u-v\vert, \quad  \forall\, u,v\in\mathbb{C}\;\, \text{and  $z\in{\cal D}$},
\end{equation}
with $L$ a positive constant independent of $z, u, v$.

After the change of unknown:  $u\to y:=z^{-\Lambda}u$, with  the parameter $\Lambda$ defined in \eqref{lambda}, the new unknown $y$ satisfies the nonlinear differential equation
\begin{equation}\label{nonlinecuacion}
zy''(z)+2\Lambda y'(z)=f(z,y(z)),
\end{equation}
where $f(z,y):= z^{1-\Lambda}\tilde f(z,z^\Lambda y)$.
Then, two possible well-posed problems, each of one provides a unique solution of the equation \eqref{nonlinecuacion}, are
\begin{equation}\label{nonlinproblemuno}
\begin{cases}
zy''(z)+2\Lambda y'(z)=f(z,y(z))\quad \text{in $\cal D$}, \\
y(0)=y_0,
\end{cases}
\end{equation}
and
\begin{equation}\label{nonlinproblemdos}
\begin{cases}
zy''(z)+2\Lambda y'(z)=f(z,y(z))\quad \text{in $\cal D$}, \\
y(z_0)=\bar y_0, \quad y'(z_0)=y_1,
\end{cases}
\end{equation}
where $z_0\ne 0$, $y_0=\mathcal{O}(1)$, $\bar y_0=\mathcal{O}(1)$ and $y_1=\mathcal{O}(\Lambda)$ are complex numbers.

A slight modification of the analysis of Section 3 provides, for problems \eqref{nonlinproblemuno} and \eqref{nonlinproblemdos} the same conclusions that we derived for problems \eqref{problemuno} and \eqref{problemdos}. We state them in the form of a theorem.

\begin{theorem}
Let $f:{\cal D}\times\mathbb{C}\to\mathbb{C}$ continuous and satisfy \eqref{lip}. Then, problems \eqref{nonlinproblemuno} and \eqref{nonlinproblemdos} have unique solutions that we denote by $y_+(z)$ and $y_-(z)$ respectively. They are independent whenever $(y_+(z_0),y_+'(z_0))\ne(y_-(z_0),y_-'(z_0))$. Moreover:
\begin{itemize}
\item[(i)] For $n=0,1,2,\ldots$, the sequences
\begin{equation}\label{nonlinrecumas}
y^+_{n+1}(z)=y_0+{z\over 2\Lambda-1}\int_0^1\left[1-t^{2\Lambda-1}\right]f\left(tz,y^+_n(zt)\right)dt, \qquad y^+_0(z)=y_0,
\end{equation}
\begin{equation}\label{nonlinrecumenos}
y^-_{n+1}(z)=\phi_-(z)+{1\over 2\Lambda-1}\int_{z_0}^z\left[1-\left({t\over z}\right)^{2\Lambda-1}\right]f\left(t,y^-_n(t)\right)dt, \qquad  y^-_0(z)=\phi_-(z),
\end{equation}
with $\phi_-(z)$ defined in \eqref{fi}, converge, for $z\in{\cal L}$ bounded, to the unique solutions $y_+(z)$ of \eqref{nonlinproblemuno} and $y_-(z)$ of \eqref{nonlinproblemdos} respectively.
\item[(ii)] The remainder $R^{\pm}_n(z):=y_\pm (z)-y^{\pm}_n(z)$ is bounded by
\begin{equation}\label{cotanonlin}
\vert R^\pm_n(z)\vert\le{L^n\vert [2(z-z_0)]^n\vert\over n!\vert 2\Lambda-1\vert^n}\vert\vert y_\pm-\phi_\pm\vert\vert_\infty.
\end{equation}
And, in consequence, the expansion
$$
y^\pm(z)=\phi_\pm+\sum_{k=0}^{n-1}[y^\pm_{k+1}(z)-y^\pm_k(z)]+R^\pm_n(z)
$$
is an asymptotic expansion for large $\Lambda$ and bounded $z\in{\cal L}$.
\end{itemize}
\end{theorem}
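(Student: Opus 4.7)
The plan is to run the Banach fixed point argument of Section 3 essentially verbatim, with the product $g(t)[w+\phi_\pm]$ replaced by the nonlinearity $f(t,w+\phi_\pm)$ and with the role of $\|g\|_\infty$ played by the Lipschitz constant $L$. The first step is to translate the hypothesis \eqref{lip} on $\tilde f$ into a clean, $z$-independent Lipschitz estimate for $f$. Since $f(z,y)=z^{1-\Lambda}\tilde f(z,z^\Lambda y)$, one has for all $y_1,y_2\in\mathbb{C}$ and $z\in\mathcal{D}$,
\[
|f(z,y_1)-f(z,y_2)|\le |z|^{1-\operatorname{Re}\Lambda}\,\frac{L}{|z|}\,|z|^{\operatorname{Re}\Lambda}|y_1-y_2|=L|y_1-y_2|,
\]
which is precisely why the weight $1/z$ is built into \eqref{lip}.

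Next, exactly as in Section 3, set $w_\pm=y_\pm-\phi_\pm$ with $\phi_+\equiv y_0$ and $\phi_-$ given by \eqref{fi}; the two problems \eqref{nonlinproblemuno}-\eqref{nonlinproblemdos} become zero-data problems for $w_\pm$. Using the Green function of $\mathbf{M}[y]=zy''+2\Lambda y'$ with the appropriate initial conditions, these zero-data problems are equivalent to Volterra integral equations $w_\pm(z)=[\mathbf{T}_\pm w_\pm](z)$, where
\[
[\mathbf{T}_\pm w](z):=\frac{1}{2\Lambda-1}\int_{z_0}^{z}\left[1-\left(\frac{t}{z}\right)^{2\Lambda-1}\right]f\bigl(t,w(t)+\phi_\pm(t)\bigr)\,dt,
\]
with the convention $z_0=0$ in the $+$ case; in the $-$ case one uses the rescaling $\tilde w_-:=z^{2\Lambda-1}w_-$ and the rescaled operator $\tilde{\mathbf{T}}$, exactly as in Section 3, so that the kernel remains bounded by $2$ on $\mathcal{L}$. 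The Picard iterates of $\mathbf{T}_\pm$ (respectively $\tilde{\mathbf{T}}$) starting at $w_0^\pm=0$ are precisely the sequences \eqref{nonlinrecumas}-\eqref{nonlinrecumenos} after adding back $\phi_\pm$.

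The key step is the inductive proof that $\mathbf{T}_\pm^n$ is eventually contractive. Combining the uniform kernel bound $2$ with the estimate $|f(z,u)-f(z,v)|\le L|u-v|$, the induction on $n$ that produced \eqref{tn} now yields, for all $u,v\in\mathcal{B}_\pm$,
\[
\bigl|[\mathbf{T}_\pm^n u](z)-[\mathbf{T}_\pm^n v](z)\bigr|\le \frac{L^n\,|2(z-z_0)|^n}{n!\,|2\Lambda-1|^n}\,\|u-v\|_\infty.
\]
For bounded $z$ and $n$ sufficiently large this is a strict contraction, so Banach's fixed point theorem provides unique fixed points $w_\pm$, hence unique solutions $y_\pm=w_\pm+\phi_\pm$ of \eqref{nonlinproblemuno}-\eqref{nonlinproblemdos}, and the iterates \eqref{nonlinrecumas}-\eqref{nonlinrecumenos} converge to them. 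Taking $u=w_\pm$ and $v=0$ in the displayed inequality, and using $\|w_\pm\|_\infty=\|y_\pm-\phi_\pm\|_\infty$, gives the bound \eqref{cotanonlin}. The asymptotic nature of $y_\pm=\phi_\pm+\sum_{k=0}^{n-1}(y_{k+1}^\pm-y_k^\pm)+R_n^\pm$ then follows exactly as at the end of Section 3 from $\|y_{n+1}^\pm-y_n^\pm\|_\infty\le \tfrac{2|z-z_0|L}{|2\Lambda-1|}\|y_n^\pm-y_{n-1}^\pm\|_\infty=\mathcal{O}(\Lambda^{-1})\|y_n^\pm-y_{n-1}^\pm\|_\infty$. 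Independence of $y_+$ and $y_-$ under the stated hypothesis is immediate from uniqueness for \eqref{nonlinproblemdos}.

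The main obstacle is essentially bookkeeping: one has to verify that the change of unknown $u\mapsto y=z^{-\Lambda}u$ combined with the weighted Lipschitz hypothesis \eqref{lip} delivers a genuinely $z$-independent Lipschitz constant for $f$ (without which the induction would accumulate factors of $|z|^{-1}$ and fail to yield the $L^n/n!$ decay), and that the same rescaling $\tilde w_-=z^{2\Lambda-1}w_-$ introduced in Section 3 keeps the kernel of the $-$ operator uniformly bounded when $f$ replaces $g\cdot(\,\cdot\,)$. Once these two points are in place, the proof is a line-by-line transcription of the argument following \eqref{tn}.
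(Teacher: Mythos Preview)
Your proposal is correct and follows essentially the same route as the paper: the paper's proof consists precisely of the change of unknown $y_\pm\mapsto w_\pm=y_\pm-\phi_\pm$, the reformulation as a Volterra fixed point equation via the same Green function and (for the $-$ case) the same rescaling $\tilde w_-=z^{2\Lambda-1}w_-$, the derivation of the uniform Lipschitz bound $|f(z,u)-f(z,v)|\le L|u-v|$ from \eqref{lip}, and then the verbatim transcription of Section~3 with $\|g\|_\infty$ replaced by $L$. One cosmetic remark: in your Lipschitz computation the intermediate equality $|z^{1-\Lambda}|=|z|^{1-\Re\Lambda}$ is not quite right for complex $\Lambda$ (there is an $e^{\Im\Lambda\arg z}$ factor), but since $|z^{1-\Lambda}|\cdot|z^{\Lambda}|=|z|$ regardless, your conclusion $|f(z,y_1)-f(z,y_2)|\le L|y_1-y_2|$ is correct.
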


\begin{proof}
It is similar to the analysis of the previous section. Then, we only give here a few significant details. After the change of unknown $y_\pm(z)$ $\to$ $w_\pm(z):=y_\pm(z)-\phi_\pm(z)$, problems \eqref{nonlinproblemuno}, \eqref{nonlinproblemdos} read, respectively
\begin{equation}\label{nonlinproblemunoi}
\begin{cases}
zw_+''(z)+2\Lambda w_+'(z)=F_+(z,w_+):=f(z,w_+(z)+\phi_+(z))\quad \text{in $\cal D$}, \\
w_+(0)=0,
\end{cases}
\end{equation}
and
\begin{equation}\label{nonlinproblemdosi}
\begin{cases}
zw_-''(z)+2\Lambda w_-'(z)=F_-(z,w_-):=f(z,w_-(z)+\phi_-(z))\quad \text{in $\cal D$}, \\
w_-(z_0)=w_-'(z_0)=0.
\end{cases}
\end{equation}
The solutions of these problems satisfy the Volterra integral equations of the second kind $w_+(z)=[{\bf T}w_+](z)$ and $w_-(z)=[{\bf T}w_-](z)$ where now, the operator ${\bf T}$ is nonlinear, and defined by
\begin{equation}\nonumber
[{\bf T}w_\pm](z):= {1\over 2\Lambda-1}\int_{z_0}^z\left[1-\left({t\over z}\right)^{2\Lambda-1}\right]f\left(t,w_\pm(t)+\phi_\pm(t)\right)dt,
\end{equation}
where $z_0$ must be set equal to zero for $w_+$. From \eqref{lip} we have the Lipschitz condition
\begin{equation}\label{lipbis}
\vert  f(z,u)- f(z,v)\vert\le {L}\vert u-v\vert \qquad  \forall\, u,v\in\mathbb{C}\;\, \text{and  $z\in{\cal D}$},
\end{equation}
with $L$ given in \eqref{lip}. From here, and using \eqref{lipbis} the proof is identical to the one of the previous section replacing $\norm{g}_\infty$ by $L$.
\end{proof}

\section{Olver's method for equation \eqref{ecuacion}}

In this section we consider two (at this moment unknown) independent solutions $y_+(z)$ and $y_-(z)$ of \eqref{ecuacion} and propose the following representations in the form of formal asymptotic expansions for large $\Lambda$:
\begin{equation}\label{olverunoa}
y_\pm(z)=y_n^\pm(z)+R_n^\pm(z),
\end{equation}
with
\begin{equation}\label{olverunob}
y_n^+(z):=\sum_{k=0}^{n-1}{A_k(z)\over (2\Lambda )^k},
\qquad y_n^-(z):=z^{1-2\Lambda}\sum_{k=0}^{n-1}{A_k(z)\over [2(1-\Lambda)]^k},
\end{equation}
and the obvious definition of $R_n^\pm(z)$.
When we introduce \eqref{olverunoa} and \eqref{olverunob} in the equation $zy''+2\Lambda y'=gy$ we find that both, $y_+(z)$ and $y_-(z)$, formally satisfy the respective differential equations, term-wise in $(2\Lambda )^k$ or $[2(1-\Lambda )]^k$, if, for $n=0,1,2,\ldots$,
\begin{equation}\label{olverecu}
A_{n+1}(z)=A_n(z)- zA_n'(z)+\int g(z)A_n(z)dz
\end{equation}
and
\begin{equation}\nonumber
\begin{split}
z[R_n^+(z)]''+2\Lambda [R_n^+(z)]'=&\, {A_n'(z)\over(2\Lambda )^{n-1}}+g(z)R_n^+(z), \\
z[R_n^-(z)]''+2(1-\Lambda )[R_n^-(z)]'=&\, {A_n'(z)\over[2(1-\Lambda )]^{n-1}}+g(z)R_n^-(z).
\end{split}
\end{equation}
Without loss of generality we may fix $A_0(z)=1$. It is obvious that, when $g(z)$ is analytic in ${\cal D}$, for $n=0,1,2,\ldots$, the coefficients $A_n(z)$ are analytic in ${\cal D}$ too. When $g(z)$ is only continuous in ${\cal L}$, for $n=0,1,2,\ldots$, the coefficients $A_n(z)$ are continuous in ${\cal L}$ and bounded at $z=0$.

We seek a solution $y^+(z)$ regular at $z=0$ and a solution $y^-(z)$ regular at $z=z_0\ne 0$. Therefore, without loss of generality, we may set $R_n^+(0)=0$ and $R_n^-(z_0)=[R_n^-]'(z_0)=0$. Then, these remainders are solutions of the respective initial value problems:
\begin{equation}\nonumber
\begin{cases}
z[R_n^+(z)]''+2\Lambda [R_n^+(z)]'= \displaystyle{A_n'(z)\over(2\Lambda )^{n-1}}+g(z)R_n^+(z) \quad \text{in ${\cal D}$}, \\
R_n^+(0)= 0,
\end{cases}
\end{equation}
and
\begin{equation}\nonumber
\begin{cases}z[R_n^-(z)]''+2(1-\Lambda) [R_n^-(z)]'= \displaystyle{A_n'(z)\over[2(1-\Lambda )]^{n-1}}+g(z)R_n^-(z)\quad \text{in ${\cal D}$}, \\
R_n^-(z_0)=[R_n^-]'(z_0)=0.
\end{cases}
\end{equation}

The first problem for $R_n^+(z)$ is identical to problem \eqref{problemunobis} for $w^+(z)$ replacing $g(z)\phi_+(z)$ by $A_n'(z)/(2\Lambda )^{n-1}$. The second problem for $R_n^-(z)$ is identical to problem \eqref{problemdosbis} for $w^-(z)$ replacing $g(z)\phi_-(z)$ by $A_n'(z)/(2\Lambda)^{n-1}$ and then $\Lambda$ by $1-\Lambda$.  Therefore, proceeding as in Section 3 we find that $R_n^+(z)$ and $R_n^-(z)$ are solutions of the respective Volterra integral equations
$$
R_n^+(z)={1\over 2\Lambda -1}\int_0^z\left[1-\left({t\over z}\right)^{2\Lambda -1}\right]\left[{A_n'(t)\over(2\Lambda )^{n-1}}+g(t)R_n^+(t)\right]dt,
$$
$$
R_n^-(z)={1\over 1-2\Lambda}\int_{z_0}^z\left[1-\left({z\over t}\right)^{2\Lambda-1}\right]\left[{A_n'(t)\over[2(1-\Lambda )]^{n-1}}+g(t)R_n^-(t)\right]dt.
$$
Using that $\vert 1-(t/z)^{2\Lambda -1}\vert\le 2$ for $t\in[0,z]$ and $\vert 1-(z/t)^{2\Lambda-1}\vert\le 2$  for $t\in[z,z_0]$, we derive the bound
$$
\vert R_n^-(z)\vert\le{2\over \vert 2\Lambda -1\vert}\int_{z_0}^z\vert g(t)R_n^-(t)\vert \vert dt\vert+{2\over \vert 2\Lambda -1\vert}\int_{z_0}^z\left\vert{A_n'(t)\over[2(1-\Lambda )]^{n-1}}\right\vert\vert dt\vert
$$
and the same bound for $R^+_n(z)$ replacing $\Lambda$ by $1-\Lambda$ and setting $z_0=0$.
Applying Gronwall's lemma \cite{coddington} we obtain
$$
\vert R_n^-(z)\vert\le \frac{2 e^{{2\over\vert 2\Lambda -1\vert}\int_{z_0}^z \vert g(t)\vert\vert dt\vert} }{\vert (2\Lambda -1)[2(1-\Lambda )]^{n-1}\vert}\int_{z_0}^z \vert A_n'(t)\vert\vert dt\vert
$$
and  the same bound for $R^+_n(z)$ replacing $\Lambda$ by $1-\Lambda$ and setting $z_0=0$.
When $A_n'(t)$ and $g(t)$ are integrable in ${\cal L}$ (this is granted when ${\cal L}$ is bounded), we also have the bounds:
\begin{equation}\label{kkk}
\begin{split}
\vert R_n^+(z)\vert\le {2\vert\vert A_n'\vert\vert_1\over\vert (2\Lambda -1)(2\Lambda )^{n-1}\vert}e^{2\vert\vert g\vert\vert_1/\vert 2\Lambda -1\vert}, \\
\vert R_n^-(z)\vert\le {2\vert\vert A_n'\vert\vert_1\over\vert (2\Lambda -1)[2(1-\Lambda )]^{n-1}\vert}e^{2\vert\vert g\vert\vert_1/\vert 2\Lambda -1\vert},
\end{split}
\end{equation}
where
$$
\vert\vert g\vert\vert_1:=\int_{\cal L} \vert g(t)\vert\vert dt\vert, \qquad\vert\vert A_n'\vert\vert_1:=\int_{\cal L} \vert A_n'(t)\vert\vert dt\vert.
$$
These bounds show the asymptotic character of the expansions \eqref{olverunoa}.

\begin{obs} We see from \eqref{olverecu} that the coefficients $A_n(z)$ are analytic functions in ${\cal D}$. Therefore, the asymptotic approximation $y_n^+(z)$ to the unique solution $y_+(z)$ of problem \eqref{problemuno} and the asymptotic approximation $y_n^-(z)$ to the unique solution $y_-(z)$ of problem \eqref{problemdos} are analytic in ${\cal D}$.
\end{obs}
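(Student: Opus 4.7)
The plan is an induction on $n$ using the explicit recurrence \eqref{olverecu}: given $A_0(z)=1$, I will show that if $A_n$ is analytic in $\mathcal{D}$ then so is $A_{n+1}$, and then deduce analyticity of the finite sums $y_n^\pm$ directly from their definitions in \eqref{olverunob}.

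First, I would spell out the base case: $A_0(z)\equiv 1$ is trivially analytic in $\mathcal{D}$. For the inductive step, assume $A_n$ is analytic in $\mathcal{D}$. Then $A_n'$ is analytic (derivative of an analytic function), so $z\mapsto A_n(z)-zA_n'(z)$ is analytic in $\mathcal{D}$. Since $g$ is analytic in $\mathcal{D}$ by hypothesis, the product $g\cdot A_n$ is also analytic there. The remaining term is the indefinite integral $\int g(z)A_n(z)\,dz$; because $\mathcal{D}$ is star-like with respect to $z=0$, it is simply connected, so I may define this primitive unambiguously as the path integral $\int_0^z g(\zeta)A_n(\zeta)\,d\zeta$ (any choice of additive constant preserves analyticity). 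The resulting function is analytic in $\mathcal{D}$ by the standard fact that the primitive of an analytic function on a simply connected domain is analytic. Summing the three analytic contributions on the right of \eqref{olverecu} yields that $A_{n+1}$ is analytic in $\mathcal{D}$, closing the induction.

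With all $A_k$ analytic in $\mathcal{D}$, the approximation $y_n^+(z)=\sum_{k=0}^{n-1}A_k(z)/(2\Lambda)^k$ is a finite linear combination of analytic functions and is therefore analytic in $\mathcal{D}$. The second approximation carries the prefactor $z^{1-2\Lambda}$, so $y_n^-(z)=z^{1-2\Lambda}\sum_{k=0}^{n-1}A_k(z)/[2(1-\Lambda)]^k$ is analytic in $\mathcal{D}$ wherever the principal branch of $z^{1-2\Lambda}$ is (that is, in $\mathcal{D}$ if the segment $\mathcal{L}$ avoids the branch cut, with the understanding $-$ already used for $y_-$ in Section 3 $-$ that $z=0$ is at worst a branch point of $y_n^-$).

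No genuine obstacle is expected: the only point that deserves care is the well-definedness of the indefinite integral in \eqref{olverecu} as an analytic function on $\mathcal{D}$, which is handled by invoking simple-connectedness of the star-like domain. The observation thus reduces to a short induction plus the remark that finite sums of analytic functions are analytic.
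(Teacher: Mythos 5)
Your induction on \eqref{olverecu} (base case $A_0\equiv 1$, closure of analyticity under differentiation, multiplication by the analytic $g$, and taking a primitive on the simply connected star-like domain ${\cal D}$), followed by the remark that the finite sums \eqref{olverunob} are then analytic, is exactly the argument the paper has in mind — it merely declares the analyticity of the $A_n$ obvious — so your proposal is correct and takes essentially the same route. Your explicit caveat that $y_n^-$ inherits at worst a branch point at $z=0$ from the prefactor $z^{1-2\Lambda}$ is in fact more careful than the paper's own wording of the observation.
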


\section{Example and numerical experiments}

Consider the differential equation
\begin{equation}\label{ejem}
z y''(z)+2\Lambda y'(z)=y(z).
\end{equation}
To find asymptotic approximations for large $\Lambda$ of two independent solutions of this equation we consider the two associated initial value problems:
\begin{equation}\label{ejem1}
\begin{cases}
z y''(z)+2\Lambda y'(z)=y(z)\quad \text{in $\mathbb{C}$,} \\
y(0)=1,
\end{cases}
\end{equation}
and
\begin{equation}\label{ejem2}
\begin{cases}z y''(z)+2\Lambda y'(z)=y(z)\quad \text{in $\mathbb{C}$,} \\
y(1)=K_{2\Lambda-1}(2),\quad y'(1)=-K_{2\Lambda}(2).
\end{cases}
\end{equation}
The unique solution of \eqref{ejem1} is a modified Bessel function (analytic in $\mathbb{C}$)
$$
y(z)=\Gamma(2\Lambda)z^{1/2-\Lambda}I_{2\Lambda-1}(2\sqrt{z}),
$$
and the unique solution of \eqref{ejem2} is a modified Bessel function
$$
y(z)=z^{1/2-\Lambda}K_{2\Lambda-1}(2\sqrt{z}),
$$
analytic in $\mathbb{C}\setminus\mathbb{R}^-$.


The iterative method introduced in Section 3 provides a convergent as well as an asymptotic expansion of these functions for large $\Lambda$ in terms of elementary functions. The recurrence relation \eqref{recumas} for problem \eqref{ejem1} is given by
\begin{equation}\label{recu1}
\begin{split}
&y^+_0(z)=1,\\
&y^+_{n+1}(z)=1+{z\over 2\Lambda-1}\int_0^1\left[1-t^{2\Lambda-1}\right]y^+_n(zt)dt,
\end{split}
\end{equation}
and the recurrence relation \eqref{recumenos} for problem \eqref{ejem2} is defined by
\begin{equation}\label{recu2}
\begin{split}
& y^-_0(z)=K_{1-2\Lambda}(2)-\frac{K_{2\Lambda}(2)}{1-2\Lambda}\left(z^{1-2\Lambda}-1\right),\\
&y^-_{n+1}(z)=y^-_0(z)+{1\over 2\Lambda-1}\int_{z_0}^z\left[1-\left({t\over z}\right)^{2\Lambda-1}\right]y^-_n(t)dt.
\end{split}
\end{equation}

On the other hand, applying Olver's method, the general solution of the differential equation \eqref{ejem} is given by a linear combination of the solutions $y_+$ and $y_-$ of problems \eqref{ejem1} and \eqref{ejem2} respectively. An asymptotic expansion of the unique solution of problem \eqref{ejem1} is proportional to
$$
y_n^+(z):=\sum_{k=0}^{n-1}{A_k(z)\over (2\Lambda )^k}.
$$
An asymptotic expansion of the unique solution of problem \eqref{ejem2} is a linear combination of
$$
y_n^+(z):=\sum_{k=0}^{n-1}{A_k(z)\over (2\Lambda )^k}
\quad \text{and} \quad y_n^-(z):=z^{1-2\Lambda}\sum_{k=0}^{n-1}{A_k(z)\over [2(1-\Lambda)]^k}.
$$
In all these formulas the coefficients $A_n(z)$ are given by the recurrence \eqref{olverecu} with $g(z)=1$:
$$
\begin{cases}
A_0(z)=1,\\
A_{n+1}(z)=A_n(z)- zA_n'(z)+\displaystyle\int A_n(z)dz.
\end{cases}
$$
They are polynomials in the variable $z$:
\begin{equation}\nonumber
\begin{split}
& A_0(z)=1,\quad
 A_1(z)=1+z, \quad A_2(z)=\displaystyle 1+z+\frac{z^2}{2},
\quad A_3(z)=\displaystyle 1+z+\frac{z^3}{6},\\
& A_4(z)=\displaystyle 1+z+\frac{z^2}{2}-\frac{z^3}{3}+\frac{z^4}{24},\quad
 A_5(z)=\displaystyle 1+z+\frac{5 z^3}{6}-\frac{5 z^4}{24}+\frac{z^5}{120},\quad \ldots.
\end{split}
\end{equation}

Olver's method also gives an asymptotic expansion of the unique solution $y_+(z)$ of \eqref{ejem1} and the unique solution $y_-(z)$ of \eqref{ejem2} for large $\Lambda$ in terms of elementary functions of $z$.

Table \ref{tabla1} and Table \ref{tabla2} show some numerical approximations, for different values of $z$ and $\Lambda$, of the solutions of \eqref{ejem1} and \eqref{ejem2} respectively supplied by the iterative algorithm compared with the approximation supplied by Olver's method.

\begin{table}[h]{\setlength{\extrarowheight}{3.5pt}\hspace*{-0.5cm} \small
\begin{center}{\hspace*{-0.25cm}
\begin{tabular}{|c|c|c|c|}
\hline
\multicolumn{4}{c}{$z=1$}
\\ \hline \hline $\Lambda$ & $n$ & Olver's method & Formula \eqref{recu1}
\\ \hline
& 1 & 0.22798242 & 0.080931451
\\
$0.75$ & 3 & 0.06396403 & 0.00040353
\\
 & 5 & 0.01879412 & 3.69$e-$7
\\ \hline
& 1 & 0.01246076 & 0.00423127
\\
$5$ & 3 & 0.00010294 & 2.22$e-$6
\\
 & 5 & 5.66$e-$7 & 3.52$e-$10
\\ \hline
& 1 & 0.00003714 & 0.00001239\\
$100$ & 3 & 7.49$e-$10 & 2.51$e-$11
\\
 & 5 & 9.25$e-15$ & 2.00$e-$17
\\ \hline
& 1 & 1.49$e-$6 & 4.99$e-$7
\\
$500$ & 3 & 1.20$e-$12 & 4.13$e-$14
\\
 & 5 & 5.93$e-19$ & 1.0$e-$21
\\ \hline
\end{tabular} \quad\begin{tabular}{|c|c|c|c|}
\hline
\multicolumn{4}{c}{$z=-2$}
\\ \hline\hline  $\Lambda$ & $n$ & Olver's method & Formula \eqref{recu1}
\\ \hline
& 1 & 1.00000000 & 4.08781323
\\
$0.5$ & 3 & 0.69593774 & 0.13062516
\\
 & 5 & 6.00987600 & 0.00060326
\\ \hline
& 1 & 0.00118982 & 0.02105883
\\
$5$ & 3 & 0.00027873 & 0.00004621
\\
 & 5 & 0.00002455 & 2.96$e-$8
\\ \hline
& 1 & 1.31$e-$6 & 0.00020038
\\
$50-2i$ & 3 & 3.25$e-$8 & 6.36$e-$9
\\
 & 5 & 2.8$e-$11 & 1.1$e-$13
\\ \hline
& 1 & 1.65$e-$7 & 0.00005008
\\
$100$ & 3 & 2.06$e-$9 & 4.07$e-$10
\\
 & 5 & 5.0$e-$13 & 1.0$e-$14
\\ \hline
\end{tabular}}
\caption{\label{tabla1} \small Numerical experiments about the relative errors in the approximation of the solution of problem \eqref{ejem1} using Olver's method and the iterative method \eqref{recu1} for different values of $\Lambda$ and $n$.}\end{center}}
\end{table}

\begin{table}[h!]{\setlength{\extrarowheight}{3.5pt}\hspace*{-0.5cm} \small
\begin{center}{\hspace*{-0.25cm}
\begin{tabular}{|c|c|c|c|}
\hline
\multicolumn{4}{c}{$z=0.5$}
\\ \hline \hline $\Lambda$ & $n$ & Olver's method & Formula \eqref{recu2}
\\ \hline
& 1 & 0.11724359 & 0.00308515
\\
$0.75$ & 3 & 0.15072603 & 2.04$e-$7
\\
 & 5 & 0.22999718 & 1.98$e-$12
\\ \hline
& 1 & 0.04701568 & 0.00080406
\\
$5$ & 3 & 0.00120818 & 3.56$e-$8
\\
 & 5 & 0.00003105 & 2.86$e-$13
\\ \hline
& 1 & 0.00974880 & 0.00004491\\
$25+5i$ & 3 & 5.46$e-$6 & 2.66$e-$10
\\
 & 5 & 3.67$e-9$ & 3.78$e-$14
\\ \hline
& 1 & 0.00498611 & 0.00001207\\
$50$ & 3 & 6.85$e-$7 & 2.15$e-$11
\\
 & 5 & 1.15$e-10$ & 7.93$e-$15
\\ \hline
\end{tabular} \quad\begin{tabular}{|c|c|c|c|}
\hline
\multicolumn{4}{c}{$z=-1+i/4$}
\\ \hline\hline  $\Lambda$ & $n$ & Olver's method & Formula \eqref{recu2}
\\ \hline
& 1 & 1.06271455 & 0.50808214
\\
$0.75$ & 3 & 0.87941096 & 0.01338941
\\
 & 5 & 0.91915445 & 0.00005423
\\ \hline
& 1 & 0.21929092 & 0.02356432
\\
$5$ & 3 & 0.00507404 & 0.00013029
\\
 & 5 & 0.00013229 & 4.81$e-$7
\\ \hline
& 1 & 0.03935288 & 0.00089998
\\
$25$ & 3 & 0.00002050 & 1.38$e-$7
\\
 & 5 & 1.39$e-$8 & 8.51$e-$12
\\ \hline
& 1 & 0.01924853 & 0.00023144
\\
$50$ & 3 & 2.35$e-$6 & 9.05$e-$9
\\
 & 5 & 3.85$e-$10 & 1.44$e-$13
\\ \hline
\end{tabular}}
\caption{\label{tabla2} \small Numerical experiments about the relative errors in the approximation of the solution of problem \eqref{ejem2} using Olver's method and the iterative method \eqref{recu2} for different values of $\Lambda$ and $n$.}\end{center}}
\end{table}

\section{Final remarks}

Olver's asymptotic expansion \eqref{expan} fails for $m=2$. In Section 2 we have modified the differential equation in the case $m=2$ that moves the asymptotic parameter $\Lambda$ from the coefficient of the unknown $u$ in the original differential equation to the coefficient of the derivative $y'$ in the new differential equation. Then, we have proposed two methods to obtain asymptotic expansions of two independent solutions of this equation: one method is just Olver's idea applied to the new differential equation. The other method is a fixed point technique that gives an asymptotic expansion for large $\Lambda$ that is also convergent. Moreover, this second method can be also applied to nonlinear differential equations. For $m\ne 2$ the asymptotic behavior for large $\Lambda$ of the solutions of \eqref{olvereq} is exponential. As a difference with the cases $m\ne 2$, in the case $m=2$ the asymptotic behavior of the solutions is not exponential, but of power type. This is why the standard Olver's method cannot be directly applied in this case.

The approximations $y_n^+(z)$ and $y_n^-(z)$ to two independent solutions of the equation \eqref{ecuacion}, derived with Olver's method, are analytic in ${\cal D}$ when $g(z)$ is analytic. On the other hand, the approximation $y_n^+(z)$ derived with the fixed point method is analytic in ${\cal D}$, but the approximation $y_n^-(z)$, is analytic in ${\cal D}$ except, possibly, for a branch point at $z=0$. In fact, the solution of \eqref{problemuno} is analytic in ${\cal D}$, whereas the solution of \eqref{problemdos} is analytic in ${\cal D}$ except, possibly, for a branch point at $z=0$. The difference between the approximations given by Olver's method and the approximations given by the fixed point method is that the later are convergent, whereas the former, in general, are not. Then, the analytic properties of the solution are the same as the analytic properties of the approximants of the fixed point method. In Olver's approximation, whereas the remainder $R_n^+(z)$ is analytic in ${\cal D}$, the remainder $R_n^-(z)$ is analytic in ${\cal D}$ except, possibly, for a branch point at $z=0$.

We start the sequence \eqref{recumas} at $y^+_0(z)=y_0$, a function bounded at $z=0$. We observe in \eqref{recumas} that the iteration $y_n^+\to y^+_{n+1}$ keeps this property, as all the terms of the sequence $y^+_n$ are bounded at $z=0$. And the sequence converges to a function of the unique one-dimensional space of solutions of equation \eqref{ecuacion} that are bounded at $z=0$. The situation is different with the recurrence \eqref{recumenos}. Except for the above mentioned one-dimensional space, the whole two-dimensional space of solutions of the equation \eqref{ecuacion} consists of functions unbounded at $z=0$. Then, even if we start the sequence $y^-_n(z)$ with a function $y^-_0(z)$ analytic at $z=0$, that is, if we take $y_1=0$ and $y_0\ne 0$ in \eqref{recumenos}, the iteration $y^-_n\to y^-_{n+1}$, in general, does not keep this property, it falls off the one dimensional space of bounded solutions at $z=0$.

The situation described in the above paragraph is one side of the coin. The other side is the fact that, for the equation \eqref{ecuacion}, it is possible to get asymptotic approximations for the unique solution of an initial value problem with initial data prescribed at $z=0$: problem \eqref{problemuno}, using either the fixed point technique or Olver's method. These methods do not work when we want to approximate a second solution independent of the previous one using an initial value problem with initial data prescribed at $z=0$: observe that we cannot set $z_0=0$ in the recursion \eqref{recumenos} as the integrals become meaningless. Something similar occurs in Olver's method: we cannot find a bound for the remainder $R_n^-(z)$ if we set $z_0=0$, as the kernel $1-(z/t)^{2\Lambda-1}$ is not bounded for $t\in[0,z]$. That is why we have considered the initial value problem \eqref{problemdos} with $z_0\ne 0$.

The error bounds \eqref{tn} and \eqref{kk} are not uniform in $z$. This means that the convergent and asymptotic character of the expansions of Section 3 is proved only over bounded subsets of ${\cal D}$. On the other hand, when $A_n'$ and $g$ are integrable in unbounded paths ${\cal L}$, the bound \eqref{kkk} shows the uniform character of the Olver's asymptotic expansions of Section 5.

\section*{Acknowledgments}

The {\it Direcci\'on General de Ciencia y Tecnolog\'{\i}a} (REF. MTM2010-21037) is acknowledged by its financial support.

\footnotesize{
}
\end{document}